\DeclareMathOperator{\GL}{GL}
\DeclareMathOperator{\IM}{Im}
\DeclareMathOperator{\spp}{supp}
\newcommand\sse{\subseteq}
\newcommand{\ovl}[1]{{\overline{#1}}}
\newcommand{\g}{\gamma}
\numberwithin{equation}{section}
\newtheorem{thm}[equation]{Theorem}
\newtheorem{lem}[equation]{Lemma}
\newtheorem{cor}[equation]{Corollary}
\newtheorem{prop}[equation]{Proposition}
\theoremstyle{definition}
\newtheorem{defn}[equation]{Definition}
\theoremstyle{remark}
\newtheorem*{Rem}{Remark}
\newtheorem*{Rems}{Remarks}
\thanks{2000 {\it Mathematics Subject Classification}.
20G15, 14L24.}
\keywords{reductive group, $G$-module, Jordan decomposition, orbit closure, rationality}
\title[Rationality and the Jordan decomposition]
{Rationality and the Jordan-Gatti-Viniberghi decomposition}
\author[J.\ Levy]{Jason Levy}
\address
{Department of Mathematics and Statistics
University of Ottawa
585 King Edward
Ottawa, ON K1N 6N5}
\email{jlevy@uottawa.ca}
\begin{document}

\begin{abstract}
 We verify the conjecture of \cite{me} and use it to prove that the
 semisimple parts of the rational Jordan-Kac-Vinberg decompositions of
 a rational vector all lie in a single rational orbit.
\end{abstract}

\maketitle

\section{Introduction}
\label{sec:intro}

Let $k$ be a field of characteristic 0, and write $\ovl k$ for its algebraic closure.  Let $G$ be a reductive algebraic group (not
necessarily connected), acting on a vector space $V$, with $G$, $V$,
and the action all defined over $k$.  Given a point $v\in V$, write $G_v$ for the stabilizer of $v$; it is an algebraic subgroup of $G$.

In \cite{Gatti}, Kac and Vinberg made the following definitions:

\begin{defn} (a) A vector $s\in V$ is {\it semisimple} if the orbit
  $G\cdot s$ is Zariski closed.\\   
(b) A vector $n\in V$ is {\it nilpotent} with respect to $G$ if the
Zariski closure $\ovl{G\cdot n}$ contains the vector 0.\\ 
(c) A {\it Jordan decomposition} of a vector $\g\in V$ is a decomposition $\g =s + n$, with
\begin{enumerate}
\item $s$ semisimple,
\item $n$ nilpotent with respect to $G_s$,
\item $G_\g \sse G_s$.
\end{enumerate}
\end{defn}

This Jordan-Kac-Vinberg decomposition matches the standard Jordan decomposition when $V$ is the 
Lie algebra $\mathfrak g$ of $G$.  In that case, every $\g\in\mathfrak g$ has a unique Jordan
decomposition $\g=s+n$, and if $\g$ lies in $\mathfrak g(k)$ then so do $s$ and $n$.   For
general $V$, however, as noted in \cite{kac_proof}, an element $\g\in V$ may
have multiple Jordan-Kac-Vinberg decompositions.  For all of them, the element $s$
lies in a single $G$-orbit, namely the unique closed $G$-orbit in
$\ovl{G\cdot \g}$.

In \cite{kac_proof}, Kac showed that a simple application of the Luna
slice theorem shows that every vector $\g\in V$ has a
Jordan-Gatti-Viniberghi decomposition (as he called it).  A rational
version of the Luna slice theorem has been proven by Bremigan
\cite{Brem}, and it implies (Lemma~\ref{k-Jordan}) that every $k$-point $\g\in V(k)$
has a $k$-Jordan-Kac-Vinberg decomposition, that is a Jordan-Kac-Vinberg decompostion
$\g=s+n$ with $s$ (and hence $n$) in $V(k)$.  This fact has not previously appeared
in the literature.

In this paper we show that given $\g\in V(k)$, the semisimple parts $S$ of all $k$-Jordan-Kac-Vinberg
decompositions $\g=s+n$ of $\g$ all lie in a single $G(k)$-orbit. In other words, even though
$k$-Jordan-Kac-Vinberg decompositions are not unique, the $G(k)$-orbit of the semisimple parts is. This
uniqueness is important in producing the fine geometric expansion in relative trace formulas 
(see the discussion in \cite{me_Pois}).

We use two tools to prove this.  One is the rational version of the
Hilbert-Mumford Theorem, as proven by Kempf \cite{kempf} and Rousseau.
The Hilbert-Mumford Theorem allows us to restate the problem in terms
of limits of the form $\lim_{t\to 0} \lambda(t)\cdot \g$, for $k$-cocharacters $\lambda$ in $G$.

The other is a recent rationality result of Bate-Martin-R\"ohrle-Tange
\cite{BMRT} on such limits.

In fact we prove the somewhat more general result,  Theorem \ref{thm}, that
given $\g\in V(k)$, the limit points $\lim_{t\to 0}
\lambda(t)\cdot \g$ that are semisimple, ranging over all
cocharacters $\lambda$ defined over $k$, all lie in a single
$G(k)$-orbit.  This solves the conjecture in~\cite{me}.  

\section{Preliminaries}

We begin with some notation.  Let $k$ be a field (of any characteristic), and write
$\ovl k$ for its algebraic closure.
Let $G$ be a reductive algebraic group (not
necessarily connected) defined over $k$. Write $X^*(G)$ for the group of
characters $\chi\colon G \to \GL(1)$, and $X_*(G)$ for the set of
cocharacters $\lambda\colon \GL(1) \to G$.  Similarly write $X^*(G)_k$
(resp. $X_*(G)_k$) for those characters (resp. cocharacters)
defined over $k$.  Define the map  
$\langle \, , \, \rangle \colon X_*(G) \times X^*(G) \to \mathbb Z$
by requiring the identity 
$\chi(\lambda(t))=t^{\langle \lambda, \chi   \rangle}$.  The group $G$
acts naturally on $X_*(G)$:
\[
(g\cdot \lambda)(t) = g\lambda(t)g^{-1}, \text{ for } g\in G,\
\lambda\in X_*(G),\  t\in \GL(1).
\]
Given $\lambda\in X_*(G)_k$ and $g\in G(k)$, the cocharacter 
$g\cdot \lambda$ is also in $X_*(G)_k$.

Suppose that $V$ is an affine $G$-variety.  Given $\lambda\in
X_*(G)$ and $v\in V$, we say that the {\it limit}
\begin{equation}\label{lim}
\lim_{t\to 0} \lambda(t)\cdot v
\end{equation}
exists and equals $x$ if there is a morphism of varieties 
$\ell\colon \mathbb A^1 \to V$ with $\ell(t) = \lambda(t)\cdot v$ for
$t\ne 0$, and $\lambda(0)=x$.  Notice that if $\ell$ exists then it is unique; also
if $V$ and $\lambda$ are defined over $k$, with $v\in V(k)$, then $\ell$ must
also be defined over $k$, and so $x$ must lie in $V(k)$.
Given $v\in V(k)$, write $\Lambda(v,k)$ for the set of $\lambda\in
X_*(G)_k$ such that the limit
(\ref{lim}) exists.

The group $G$ acts on itself via the action $y\mapsto xyx^{-1}$.
Given $\lambda\in X_*(G)$, let $P(\lambda)$ be the subvariety  
$$P(\lambda)=
\{\, g\in G\mid \lim_{t\to 0} \lambda(t) g \lambda(t)^{-1} \text{
  exists}\,\}.$$
It is an algebraic group, defined over $k$ if $\lambda$ is.  These
groups $P(\lambda)$ were defined in \cite{tuples} and are called the
Richardson parabolic subgroups in \cite{BMRT}.  The map 
$$
h_\lambda\colon P(\lambda) \to G,\quad 
h_\lambda(g)= \lim_{t\to 0} \lambda(t) g \lambda(t)^{-1}
$$
is a homomorphism of algebraic groups, defined over $k$ if $\lambda$
is.  The image and kernel are given by
\begin{align*}
\IM h_\lambda &= G^\lambda = \{\, g\in G\mid \lambda(t)g\lambda(t)^{-1}=g, \text{ for
  all }t\,\}\\
\ker h_\lambda &= 
\{\, g\in G\mid \lim_{t\to 0}\lambda(t)g\lambda(t)^{-1}=1\,\}
= R_u(P(\lambda))
\end{align*}
(see \cite{tuples} for details).

Now suppose that $V$ is a $G$-module defined over $k$.  Given any
$\lambda\in X_*(G)$, we can then define the $G^\lambda$-modules
\begin{align*}
V_{\lambda, n} &= \{ v\in V \mid \lambda(t)\cdot v = t^n v \text{ for
  all } t\,\}, \ n\in \mathbb Z\\
V_{\lambda, +} &=\sum_{n>0}V_{\lambda,n},\quad
V_{\lambda,0+}=\sum_{n\ge0}V_{\lambda,n}=V_{\lambda,0}\oplus V_{\lambda,+}.
\end{align*}
Notice that $V_{\lambda,0+}$ consists of those vectors $v\in V$ such that the
limit (\ref{lim}) exists, and is invariant under $P(\lambda)$; in fact
for $g\in P(\lambda)$, and $v\in V_{\lambda,0+}$,
\begin{equation}\label{p-conj}
\lim_{t\to 0}\lambda(t)\cdot (g\cdot v) = h_\lambda(g)\cdot
\left(\lim_{t\to 0}\lambda(t)\cdot v\right).
\end{equation}
Further, for $v\in V_{\lambda,0+}=V_{\lambda,0}\oplus V_{\lambda,+}$,
the limit (\ref{lim}) is just the projection of $v$ to
$V_{\lambda,0}$.

Suppose next that $A$ is a maximal $k$-split torus in $G$.  For each
$\chi\in X^*(A)_k$ define $V^\chi$ by
\[
V^\chi = \{ v\in V\mid a\cdot v = \chi(a)v, \quad \text{for all }a\in A\}.
\]
Then only finitely many $V^\chi$ are nonzero and $V$ is their direct
sum.  Given a vector $v\in V$, write $v_\chi$ for the component of $v$ in
the space $V^\chi$, $\chi\in X^*(A)_k$, and set
\[
\spp v = \spp_A v = \{ \chi\in X^*(A)_k\mid v_\chi \ne 0\},
\]
so that
\[
v=\sum_{\chi\in \spp v} v_\chi.
\]

For any $\lambda\in X_*(A)_k \subset X_*(G)_k$, each vector space
$V_{\lambda,n}$, $n\in \mathbb Z$, is also a direct sum of weight spaces:
\[
V_{\lambda,n} = 
\sum_{{\chi\in X^*(A)_k}\atop {\langle \lambda,\chi\rangle=n}}
V^\chi.
\]
We record the following obvious statement for later use.
\begin{lem}\label{limits}
For a vector $v\in V$, the limit (\ref{lim}) exists if and only if for
every $\chi\in \spp v$ we have $\langle \lambda, \chi \rangle \ge 0$; in
this case the limit equals
\[
\sum_{{\chi\in X^*(A)_k}\atop {\langle \lambda,\chi\rangle=0}} v_\chi.
\]
\end{lem}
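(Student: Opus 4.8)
The plan is to reduce everything to an explicit computation of $\lambda(t)\cdot v$ in coordinates adapted to the torus $A$. First I would use the weight decomposition $v=\sum_{\chi\in\spp v}v_\chi$ together with the defining identity $\chi(\lambda(t))=t^{\langle\lambda,\chi\rangle}$. Since $\lambda\in X_*(A)_k$ factors through $A$ and each $v_\chi\in V^\chi$, the component transforms by $\lambda(t)\cdot v_\chi=\chi(\lambda(t))\,v_\chi=t^{\langle\lambda,\chi\rangle}v_\chi$, so that
\[
\lambda(t)\cdot v=\sum_{\chi\in\spp v}t^{\langle\lambda,\chi\rangle}\,v_\chi
\]
as a $V$-valued Laurent expression in the variable $t\in\GL(1)$.

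Next I would match this against the definition of the limit from the preliminaries. By that definition the limit exists if and only if the morphism $t\mapsto\lambda(t)\cdot v$, a priori defined only on $\GL(1)$, extends to a morphism $\ell\colon\mathbb A^1\to V$. Writing the displayed expression in a fixed basis of $V$ subordinate to the weight spaces $V^\chi$, each coordinate of $\ell$ is an honest Laurent polynomial in $t$, and it extends across $t=0$ exactly when no negative powers of $t$ occur. Because the $v_\chi$ lie in distinct weight spaces and are therefore linearly independent, no cancellation among the monomials $t^{\langle\lambda,\chi\rangle}$ is possible, so a negative exponent genuinely survives precisely when some $\chi\in\spp v$ satisfies $\langle\lambda,\chi\rangle<0$. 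Hence the limit exists if and only if $\langle\lambda,\chi\rangle\ge0$ for every $\chi\in\spp v$, which is the asserted criterion.

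Finally, assuming the criterion holds, I would simply evaluate $\ell$ at $t=0$. The terms with $\langle\lambda,\chi\rangle>0$ contribute $0$, while the terms with $\langle\lambda,\chi\rangle=0$ are unaffected, giving
\[
\lim_{t\to0}\lambda(t)\cdot v=\sum_{{\chi\in X^*(A)_k}\atop{\langle\lambda,\chi\rangle=0}}v_\chi,
\]
as claimed. There is no substantive obstacle here; the statement is a bookkeeping consequence of the weight-space decomposition, and the only point deserving a word of care is the linear independence of the weight components, which is what guarantees that existence of the limit can be read off termwise from the integers $\langle\lambda,\chi\rangle$ rather than being obscured by an accidental cancellation.
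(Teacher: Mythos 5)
Your proof is correct and is exactly the argument the paper has in mind: the paper records this lemma without proof as an ``obvious statement,'' its preliminaries having already noted that $V_{\lambda,n}=\sum_{\langle\lambda,\chi\rangle=n}V^\chi$ and that the limit (\ref{lim}) exists precisely when $v\in V_{\lambda,0+}$, where it equals the projection of $v$ to $V_{\lambda,0}$. Your termwise computation with the weight decomposition, including the observation that linear independence of the weight components rules out any cancellation of negative-exponent terms, is precisely this bookkeeping made explicit.
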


\section{Limits}

In this section we assume that $k$ is perfect.  We summarize some
results that we will later use. First is the rational version of the
Hilbert-Mumford Theorem, 
\cite[Cor.~4.3]{kempf}. 
\begin{lem}\label{HM}
If $\g\in V(k)$, then there exists $\lambda\in X_*(G)_k$ so that the limit
$$\lim_{t\to 0} \lambda(t)\cdot \g$$
exists and is semisimple.
\end{lem}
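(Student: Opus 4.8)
The plan is to deduce this from Kempf's optimality theory \cite{kempf}, since the statement is exactly the rational Hilbert--Mumford theorem. First I would reduce the problem to driving $\g$ into a closed orbit. By standard invariant theory the closure $\ovl{G\cdot\g}$ contains a unique closed $G$-orbit $Y$, and a vector of $\ovl{G\cdot\g}$ is semisimple precisely when it lies in $Y$: indeed, a semisimple $w\in\ovl{G\cdot\g}$ has closed orbit $G\cdot w\sse\ovl{G\cdot\g}$, which must be $Y$, while every point of the closed orbit $Y$ is semisimple. So it suffices to produce $\lambda\in X_*(G)_k$ with $\lim_{t\to0}\lambda(t)\cdot\g\in Y$. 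If $\g\in Y$ there is nothing to prove, so I assume $\g\notin Y$.

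Over $\ovl k$ the classical Hilbert--Mumford theorem provides some $\mu\in X_*(G)$ with $\lim_{t\to0}\mu(t)\cdot\g\in Y$; the entire difficulty is to choose such a cocharacter over $k$. The naive idea of averaging $\mu$ over the Galois group $\Gamma=\operatorname{Gal}(\ovl k/k)$ fails, because ``the limit lies in $Y$'' is not a linear condition and the Galois conjugates of $\mu$ need not share a limit. The remedy is canonicity: I would fix a $G$-invariant and $\Gamma$-invariant norm $\|\cdot\|$ on cocharacters and, following Kempf, single out those $\lambda$ that drive $\g$ into $Y$ most efficiently relative to $\|\cdot\|$. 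Since the set of such destabilizing cocharacters is nonempty over $\ovl k$, this optimization problem has a solution.

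Kempf's theorem then attaches to $\g$ a canonical optimal destabilizing parabolic $P=P(\lambda)$, the same for every optimal $\lambda$, with any two optimal cocharacters conjugate under $P$. Because $P$ and the optimal class are determined intrinsically by $\g\in V(k)$, they are stable under $\Gamma$; as $k$ is perfect this forces $P$ to be defined over $k$ and yields a $k$-rational optimal cocharacter $\lambda\in X_*(G)_k$. For this $\lambda$ the limit $\lim_{t\to0}\lambda(t)\cdot\g$ exists and lies in $Y$, hence is semisimple, which is the assertion.

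I expect the rationality step to be the main obstacle: extracting an honest $k$-point $\lambda\in X_*(G)_k$ from the $\Gamma$-stable optimal class. This is where perfectness of $k$ is essential and where the work of Kempf (and Rousseau) does the real labour, namely showing that the optimal parabolic descends to $k$ and that the transitive $P$-action on optimal cocharacters admits a $k$-rational representative. A secondary technical point is to set up the numerical invariant so that it measures approach to the closed orbit $Y$ rather than merely to the origin; this requires working in the fibre $\pi^{-1}(\pi(\g))$ of the affine quotient $\pi\colon V\to V/\!/G$ in place of the nullcone, after which Lemma~\ref{limits}, applied on a maximal $k$-split torus of $P$, lets one read off the limit.
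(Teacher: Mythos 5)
Your proposal is correct and takes essentially the same route as the paper: the paper gives no proof at all, citing the statement as Kempf's Corollary 4.3, and your sketch --- reduce to reaching the unique closed orbit $Y\sse\ovl{G\cdot\g}$, then invoke Kempf's canonical optimal destabilizing class, whose Galois-stability together with perfectness of $k$ produces a $k$-rational optimal cocharacter --- is precisely the standard derivation of that corollary from Kempf's optimality theory. The one point worth making explicit is that $Y$ itself is defined over $k$ (it is Galois-stable by uniqueness of the closed orbit in $\ovl{G\cdot\g}$, and $k$ is perfect), since this is what makes the optimization problem Galois-equivariant and puts you in the setting of Kempf's rationality theorem.
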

\begin{Rem}Note that this limit point must necessarily lie in $V(k)$.
\end{Rem}
The following two results are also essential to our proof.  The first
is a restatement of \cite[Lemma 2.15]{BMRT}.
\begin{lem}\label{2.15} Suppose that $A\subset G$ is a torus and $\lambda,
  \lambda_0$ are in $X_*(A)_k$. Suppose that vectors $\g, v_0, v'\in V$ are
related by 
\begin{align*}
v_0&=\lim_{t\to 0} \lambda_0(t)\cdot \g,\\
v' &=\lim_{t\to 0} \lambda(t)\cdot v_0.
\end{align*}
Then there exists $\mu\in X_*(A)_k$ such that
\begin{align*}
V_{\mu,0} &= V_{\lambda_0, 0} \cap V_{\lambda,0}\\
V_{\mu,+} &\supseteq V_{\lambda_0,+}, \quad
V_{\mu,0+} \subseteq V_{\lambda_0,0+},\\
v' &= \lim_{t\to 0} \mu(t)\cdot \g.
\end{align*}
\end{lem}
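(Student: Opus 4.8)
The plan is to produce $\mu$ as an explicit integer combination of $\lambda_0$ and $\lambda$, namely $\mu = N\lambda_0 + \lambda$ (so that $\mu(t) = \lambda_0(t)^N\lambda(t)$) for a sufficiently large positive integer $N$. Since $\lambda_0,\lambda\in X_*(A)_k$ and $X_*(A)_k$ is stable under integer combinations, $\mu$ lies in $X_*(A)_k$ as required. The entire verification then reduces to bookkeeping of the integer pairings $\langle\mu,\chi\rangle = N\langle\lambda_0,\chi\rangle + \langle\lambda,\chi\rangle$ as $\chi$ ranges over the finite set $W$ of $A$-weights of $V$ (those $\chi$ with $V^\chi\ne 0$), using that $V_{\nu,n} = \sum_{\langle\nu,\chi\rangle=n}V^\chi$ for any $\nu\in X_*(A)_k$.

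First I would fix $N$ strictly larger than $\max_{\chi\in W}|\langle\lambda,\chi\rangle|$, which is possible because $W$ is finite. With this choice the sign of $\langle\mu,\chi\rangle$ is governed by $\langle\lambda_0,\chi\rangle$ whenever the latter is nonzero: if $\langle\lambda_0,\chi\rangle > 0$ then $\langle\mu,\chi\rangle \ge N - |\langle\lambda,\chi\rangle| > 0$, and symmetrically $\langle\lambda_0,\chi\rangle < 0$ forces $\langle\mu,\chi\rangle < 0$; while if $\langle\lambda_0,\chi\rangle = 0$ then simply $\langle\mu,\chi\rangle = \langle\lambda,\chi\rangle$. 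These three observations translate directly into the desired relations between weight spaces. Indeed $\langle\mu,\chi\rangle = 0$ holds exactly when $\langle\lambda_0,\chi\rangle = 0$ and $\langle\lambda,\chi\rangle = 0$, giving $V_{\mu,0} = V_{\lambda_0,0}\cap V_{\lambda,0}$; every weight with $\langle\lambda_0,\chi\rangle > 0$ satisfies $\langle\mu,\chi\rangle > 0$, giving $V_{\mu,+}\supseteq V_{\lambda_0,+}$; and every weight with $\langle\mu,\chi\rangle\ge 0$ must have $\langle\lambda_0,\chi\rangle\ge 0$, giving $V_{\mu,0+}\subseteq V_{\lambda_0,0+}$.

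It then remains to identify the limit, and here I would invoke Lemma~\ref{limits} three times. Existence of $v_0$ gives $\langle\lambda_0,\chi\rangle\ge 0$ for all $\chi\in\spp\g$ together with $\spp v_0 = \{\chi\in\spp\g : \langle\lambda_0,\chi\rangle = 0\}$; existence of $v'$ gives $\langle\lambda,\chi\rangle\ge 0$ for all $\chi\in\spp v_0$ and $v' = \sum_{\chi\in\spp\g,\ \langle\lambda_0,\chi\rangle=0,\ \langle\lambda,\chi\rangle=0}\g_\chi$. Combining this with the sign analysis above shows $\langle\mu,\chi\rangle\ge 0$ for every $\chi\in\spp\g$, so $\lim_{t\to 0}\mu(t)\cdot\g$ exists, and that for such $\chi$ one has $\langle\mu,\chi\rangle = 0$ precisely when $\langle\lambda_0,\chi\rangle = \langle\lambda,\chi\rangle = 0$; hence Lemma~\ref{limits} computes this limit to be exactly $v'$.

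I expect no serious obstacle: the content lies entirely in guessing the combination $\mu = N\lambda_0 + \lambda$ and then choosing $N$ uniformly, which the finiteness of $W$ makes possible. The one point that deserves care is that the statement asserts genuine \emph{inclusions} rather than equalities for $V_{\mu,+}$ and $V_{\mu,0+}$; these cannot be strengthened, since weights with $\langle\lambda_0,\chi\rangle = 0$ but $\langle\lambda,\chi\rangle\ne 0$ get shuffled between the nonnegative and positive parts, so I would claim only what actually holds. Finally, if $A$ is not $k$-split one computes the supports and weight spaces after extending scalars to $\ovl k$, but $\mu$ remains defined over $k$ because it is an integer combination of $k$-cocharacters.
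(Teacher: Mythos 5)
Your proof is correct: the choice $N>\max_{\chi}|\langle\lambda,\chi\rangle|$, the resulting sign analysis of $\langle\mu,\chi\rangle=N\langle\lambda_0,\chi\rangle+\langle\lambda,\chi\rangle$, and the three applications of Lemma~\ref{limits} (including the passage to $\ovl k$ when $A$ is not split) all check out. The paper itself gives no proof of this lemma---it cites \cite{BMRT}, Lemma 2.15---but its accompanying remark states that $\mu$ may be taken of the form $n\lambda_0+\lambda$ for sufficiently large $n$, which is exactly your construction, so your argument is a correct self-contained version of the intended proof.
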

\begin{Rem}
The cocharacter $\mu$ can be of the form $n\lambda_0 + \lambda$ for
any sufficiently large $n\in\mathbb N$.
\end{Rem}

The second result is \cite[Cor.~3.7]{BMRT}.
\begin{lem} \label{3.7}
Let $v\in V(k)$ be semisimple.  For every
  $\lambda\in X_*(G)_k$, if the limit $\lim_{t\to 0} \lambda(t)\cdot v$
  exists, then it lies in $G(k)\cdot v$.
\end{lem}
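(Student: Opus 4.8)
The plan is to reinterpret the statement as the vanishing of a torsor class and to reduce it to a purity result over a discrete valuation ring. Set $O=G\cdot v$; since $v$ is semisimple this orbit is Zariski closed, so the limit $v'=\lim_{t\to0}\lambda(t)\cdot v$ lies in $O$ and is again semisimple, and it lies in $V(k)$ because $v\in V(k)$ and $\lambda$ is defined over $k$. The fiber $F=\{\,h\in G\mid h\cdot v=v'\,\}$ is the preimage of the $k$-point $v'$ under the $k$-morphism $G\to V$, $h\mapsto h\cdot v$; hence $F$ is a closed $k$-subvariety of $G$, it is a single right coset of $G_v$, and $F(k)\ne\emptyset$ is precisely the desired conclusion $v'\in G(k)\cdot v$. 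Thus $F$ is a torsor under $G_v$ over $k$, and the statement is equivalent to the triviality of its class $[F]\in H^1(k,G_v)$. The crucial structural point is that, because $O$ is affine (a closed subvariety of the vector space $V$), Matsushima's criterion forces $G_v$ to be \emph{reductive}; this is exactly where the semisimplicity of $v$ is used, and it is indispensable.

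Next I would exhibit $F$ as the special fiber of a degenerating family of torsors. The assignment $t\mapsto\lambda(t)\cdot v$ extends to a $k$-morphism $\ell\colon\mathbb A^1\to O$ with $\ell(0)=v'$, since the limit exists and $O$ is closed. Pulling back the $G_v$-torsor $G\to O\cong G/G_v$ along $\ell$ produces a $G_v$-torsor $E\to\mathbb A^1$ over $k$ whose fiber over $t$ is $\{\,h\mid h\cdot v=\ell(t)\,\}$; in particular its fiber over $0$ is $F$, while over $\mathbb G_m$ the cocharacter $\lambda$ itself furnishes a section, so $E$ is trivial there. Restricting to the complete local ring $\hat R=k[[t]]$ with fraction field $\hat K=k((t))$, we obtain a $G_v$-torsor over $\hat R$ that is trivial over $\hat K$ and whose closed fiber is $F$.

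It now suffices to know that a generically trivial $G_v$-torsor over the henselian discrete valuation ring $\hat R$ is trivial on its closed point. Concretely, I would invoke the injectivity of the restriction $H^1(\hat R,G_v)\to H^1(\hat K,G_v)$ for the reductive group scheme $(G_v)_{\hat R}$ (a purity theorem of Nisnevich type). Since $E$ is trivial over $\hat K$, its class in $H^1(\hat R,G_v)$ vanishes; as $G_v$ is smooth (automatic in characteristic $0$) and $\hat R$ is henselian, restriction to the closed point gives an isomorphism $H^1(\hat R,G_v)\xrightarrow{\sim}H^1(k,G_v)$, whence $[F]=0$ and $F(k)\ne\emptyset$. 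One may phrase this more hands-on: the evident section $\lambda(t)\in G(\hat K)$ of $E$ does \emph{not} extend across $0$ (indeed $\lim_{t\to0}\lambda(t)$ need not exist in $G$), but triviality of the torsor lets us correct it by some $g(t)\in G_v(\hat K)$ so that $\lambda(t)g(t)$ extends to a morphism $\hat R\to G$ whose value at $0$ lands in $F(k)$.

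The main obstacle is exactly this purity input: passing from generic triviality of the torsor to triviality of its special fiber. This step genuinely fails for arbitrary stabilizers — for a unipotent $G_v$ one can construct generically trivial torsors over $\hat R$ that are nontrivial on the closed point, mirroring the fact that Lemma~\ref{3.7} is itself false for non-semisimple $v$. Reductivity of $G_v$, forced by the closedness of the orbit, is precisely what supplies the injectivity of $H^1(\hat R,G_v)\to H^1(\hat K,G_v)$. A secondary technical point is that $G_v$ may be disconnected; one then runs the injectivity argument for the connected reductive group $G_v^\circ$ and separately for the finite \'etale component group over $\hat R$, combining them through the induced sequence in nonabelian cohomology.
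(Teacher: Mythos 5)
The first thing to say is that the paper offers no proof of Lemma~\ref{3.7} at all: it is imported verbatim from \cite[Cor.~3.7]{BMRT}, where the argument is structure-theoretic, carried out inside the Richardson parabolic $P(\lambda)$, and in fact yields the sharper conclusion recorded in the paper's remark, namely that the limit lies in $R_u(P(\lambda))(k)\cdot v$. Your route is genuinely different: you identify the obstruction as a class in $H^1(k,G_v)$, spread it out as the special fiber of a $G_v$-torsor over $\hat R=k[[t]]$ that is trivialized over $\hat K=k((t))$ by $\lambda$ itself, and kill it by Grothendieck--Serre/Nisnevich injectivity for reductive group schemes over a henselian DVR, with Matsushima's criterion converting closedness of the orbit into reductivity of $G_v$. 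In characteristic zero this is sound: closedness of $G\cdot v$ is used exactly where it must be, to ensure $\ell(0)=v'$ lies in the orbit so that the pullback family is a $G_v$-torsor over all of $\mathbb A^1$; the section $t\mapsto\lambda(t)$ trivializes it over $\mathbb G_m$; and the d\'evissage you sketch for disconnected $G_v$ does close up (a finite \'etale torsor over $\hat R$ that is generically trivial is trivial by the valuative criterion of properness, $\pi_0(G_v)(\hat K)=\pi_0(G_v)(\hat R)$, and twisting then reduces everything to the connected reductive group $G_v^\circ$). What your proof buys is a conceptual explanation of the statement; what it loses, besides elementariness, is the refinement $v'\in R_u(P(\lambda))(k)\cdot v$, which the cohomological argument does not see.

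Two caveats, one of which is a real gap relative to the statement as it sits in the paper. First, Lemma~\ref{3.7} appears in Section 3, where $k$ is only assumed \emph{perfect}, and Theorem~\ref{thm} is claimed over any perfect field; your proof genuinely uses characteristic $0$. In characteristic $p$ the scheme-theoretic stabilizer $G_v$ need not be smooth, the orbit map $G\to G\cdot v$ is then only an fppf torsor under a possibly non-smooth group scheme, and the Nisnevich-type injectivity you invoke is a theorem about smooth reductive group schemes; so your argument proves the lemma only in characteristic $0$ (enough for the Section 4 application, not for the stated generality). Second, your closing heuristic is wrong: for unipotent $G_v$ in characteristic $0$ (equivalently, split unipotent) one has $H^1(\hat R,G_v)=H^1(\hat K,G_v)=1$, since such groups are iterated extensions of $\mathbb G_a$, so there are no generically trivial torsors over $\hat R$ that are nontrivial on the closed fiber. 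The lemma does fail for non-semisimple $v$, but the failure occurs earlier in your own argument: the limit typically leaves the geometric orbit entirely (so $F=\emptyset$ and the family is not a torsor at $t=0$), and when it does not, non-rationality must come from stabilizers with nontrivial cohomology, not unipotent ones. Semisimplicity is therefore ``indispensable'' at the two places you actually use it --- keeping $v'$ in the orbit, and feeding Matsushima into the purity theorem --- and the purity-counterexample justification should be deleted.
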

\begin{Rem}
In fact \cite[Cor.~3.7]{BMRT} shows that the limit must lie in
$R_u(P(\lambda))(k)\cdot v.$
\end{Rem}

Our main result in this section is the following:
\begin{thm} \label{thm} Let $G$ be a reductive group and $V$ a $G$-module.
  Suppose that $k$ is perfect and let $\g\in V(k)$.  Then for every
  $\lambda, \mu\in X_*(G)_k$ such that both vectors 
  $v=\lim_{t\to 0}\lambda(t)\cdot \g$ and $v'=\lim_{t\to 0}\mu(t)\cdot \g$
  exist and are semisimple, $v'$ lies in $G(k)\cdot v$.
\end{thm}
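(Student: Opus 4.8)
The plan is to exhibit a single vector $w\in V(k)$ lying in both $G(k)\cdot v$ and $G(k)\cdot v'$; once this is done, $v$ and $v'$ share a $G(k)$-orbit and the theorem follows. The mechanism for trapping a limit inside a prescribed $G(k)$-orbit is Lemma~\ref{3.7}: since $v$ and $v'$ are semisimple, \emph{any} limit $\lim_{t\to 0}\nu(t)\cdot v$ (resp.\ of $v'$) along a $k$-cocharacter $\nu$ that happens to exist automatically lands in $G(k)\cdot v$ (resp.\ $G(k)\cdot v'$). Thus it suffices to produce one cocharacter carrying $v$ to $w$ and one carrying $v'$ to the same $w$, and to know that both limits exist.

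The heart of the argument is the case in which $\lambda$ and $\mu$ already lie in a common maximal $k$-split torus $A$; here I would argue purely with weights. Writing $\gamma=\sum_{\chi\in\spp\gamma}\gamma_\chi$, Lemma~\ref{limits} turns the hypotheses into $\langle\lambda,\chi\rangle\ge 0$ and $\langle\mu,\chi\rangle\ge 0$ for every $\chi\in\spp\gamma$, together with
$$v=\sum_{\langle\lambda,\chi\rangle=0}\gamma_\chi,\qquad v'=\sum_{\langle\mu,\chi\rangle=0}\gamma_\chi.$$
Because $\spp v\subseteq\spp\gamma$, the limit $\lim_{t\to 0}\mu(t)\cdot v$ exists by Lemma~\ref{limits} and equals $\sum_{\langle\lambda,\chi\rangle=\langle\mu,\chi\rangle=0}\gamma_\chi$; by the symmetric computation $\lim_{t\to 0}\lambda(t)\cdot v'$ also exists and equals the \emph{same} vector $w$. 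Applying Lemma~\ref{3.7} once to $v$ and once to $v'$ then gives $w\in G(k)\cdot v$ and $w\in G(k)\cdot v'$, so $v'\in G(k)\cdot v$ in this case.

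The main obstacle is reducing the general situation to this common-torus case: a priori $\lambda$ and $\mu$ lie in \emph{different} maximal $k$-split tori, and although all such tori are $G(k)$-conjugate, conjugating $\mu$ into the torus $A$ containing $\lambda$ simultaneously moves the base point $\gamma$, whereas the two limits are tied to one and the same $\gamma$; thus naive conjugation merely transports the whole problem without aligning the tori. This is precisely where I expect Lemma~\ref{2.15} to be essential: it replaces a nested pair of limits taken within a common torus by a single merged cocharacter of the form $n\lambda_0+\lambda$, and its support inclusions $V_{\mu,0+}\subseteq V_{\lambda_0,0+}$ are exactly the bookkeeping needed to guarantee that the intermediate limit used above actually exists. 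The delicate point will be to choose the auxiliary cocharacters and torus so that, after merging, $v$ and $v'$ are realized (up to $G(k)$) as limits of the same vector along cocharacters of a single torus; once that alignment is achieved, the weight computation of the previous paragraph closes the argument.
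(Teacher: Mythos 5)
Your handling of the common-torus case is correct, and it is in fact cleaner than the paper's treatment of that same case (Lemma~\ref{commuting}): by computing the common limit $w$ symmetrically and applying Lemma~\ref{3.7} once to $v$ and once to $v'$, you avoid both the minimality device $\Lambda(\g,k)_{\rm min}$ and Lemma~\ref{2.15}. But the reduction of the general statement to the common-torus case, which you defer as ``the delicate point,'' is a genuine gap --- and it is exactly where the paper's proof does its real work. Worse, the tool you nominate for that reduction cannot perform it: Lemma~\ref{2.15} \emph{presupposes} that $\lambda_0$ and $\lambda$ lie in $X_*(A)_k$ for a common torus $A$ (its merged cocharacter $n\lambda_0+\lambda$ only makes sense for commuting cocharacters), so it merges nested limits \emph{inside} one torus; it never moves a cocharacter from one maximal $k$-split torus into another, which is the difficulty you yourself identified.

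The missing mechanism is a relative Bruhat decomposition. Take a maximal $k$-split torus $A$ containing the image of $\mu$ and a minimal $k$-parabolic $P$ with $C_G(A)\sse P\sse P(\mu)$ (the paper instead pivots on a cocharacter $\lambda_0\in\Lambda(\g,k)_{\rm min}$, because its asymmetric Lemma~\ref{commuting} requires minimality); choose $g\in G(k)$ with $\lambda_A:=g\cdot\lambda\in X_*(A)_k$, adjusted on the left by $N_G(A)(k)$ so that $P\sse P(\lambda_A)$; then write $g=pwu$ with $p\in P(k)\sse P(\lambda_A)(k)$, $w\in N_G(A)(k)$, $u\in R_u(P)(k)$. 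Using (\ref{p-conj}) to absorb $p$, one gets
\[
v=g^{-1}h_{\lambda_A}(p)\,w\cdot\lim_{t\to 0}(w^{-1}\cdot\lambda_A)(t)\cdot(u\cdot\g),
\]
so $\lim_{t\to 0}(w^{-1}\cdot\lambda_A)(t)\cdot(u\cdot\g)$ exists and lies in $G(k)\cdot v$, hence is semisimple; while $u\in R_u(P)(k)\sse P(\mu)(k)$ gives $\lim_{t\to 0}\mu(t)\cdot(u\cdot\g)=h_\mu(u)\cdot v'\in G(k)\cdot v'$, also semisimple. Now $\mu$ and $w^{-1}\cdot\lambda_A$ are two elements of $X_*(A)_k$ applied to the \emph{single} vector $u\cdot\g$: this is exactly your common-torus situation, and your weight computation then yields $G(k)\cdot v=G(k)\cdot v'$. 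So your plan is completable --- indeed, completed this way it would streamline the paper, eliminating $\Lambda(\g,k)_{\rm min}$ and Lemma~\ref{2.15} altogether --- but as written the proposal omits the one step (the Bruhat decomposition taming the displacement of $\g$ under conjugation) on which the whole theorem turns.
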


\begin{Rems} (a) This solves Conjecture 1.5 of \cite{me}.\\
(b) As is well-known (see for example \cite[Remark 2.8]{BMRT} or
\cite[Lemma 1.1]{kempf}), we can embed any affine
$G$-variety over $k$ inside a $k$-defined rational $G$-module, and
hence Theorem \ref{thm} is also valid for affine $G$-varieties.
\end{Rems}

\begin{defn} Let $\Lambda(\g,k)_{\rm min}$ be the set of cocharacters
  that minimize $\dim V_{\lambda, 0}$, among $\lambda\in X_*(G)_k$
  such that $\lim_{t\to 0} \lambda(t)\cdot \g$ exists and is semisimple.
\end{defn}
\begin{Rem}By the Kempf-Rousseau-Hilbert-Mumford theorem \ref{HM}, and because $\dim V_{\lambda, 0}$ is always a nonnegative integer, the set $\Lambda(\g,k)_{\rm min}$ is non-empty.
\end{Rem}

\begin{lem} \label{uv}
Given $\lambda\in  \Lambda(\g,k)_{\rm min}$ and $p\in P(\lambda)(k)$, we have that $\lambda \in \Lambda(p\cdot \g, k)_{\rm min}.$  Further, the limit points
$$
v=\lim_{t\to 0}\lambda(t)\cdot \g \quad\text{and }\quad \lim_{t\to 0}\lambda(t) \cdot (p\cdot \g)
$$
lie in the same $G(k)$-orbit.
\end{lem}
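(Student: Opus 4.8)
The plan is to treat the two assertions separately, since the second one (that the two semisimple limit points lie in a single $G(k)$-orbit) is really a byproduct of computing the limit for $p\cdot\g$ explicitly, while the first one (minimality) then follows from a conjugation-invariance observation. So I would first pin down the limit $\lim_{t\to 0}\lambda(t)\cdot(p\cdot\g)$, and only afterwards address the dimension-minimizing condition.

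First I would compute the limit. Since $\lim_{t\to 0}\lambda(t)\cdot\g$ exists we have $\g\in V_{\lambda,0+}$, and because $V_{\lambda,0+}$ is $P(\lambda)$-invariant while $p\in P(\lambda)(k)$, also $p\cdot\g\in V_{\lambda,0+}$; in particular the limit for $p\cdot\g$ exists. Applying the conjugation formula \eqref{p-conj} with $g=p$ gives
\[
\lim_{t\to 0}\lambda(t)\cdot(p\cdot\g)=h_\lambda(p)\cdot v .
\]
Because $h_\lambda$ is a homomorphism defined over $k$ (as $\lambda$ is) and $p\in P(\lambda)(k)$, the element $h_\lambda(p)$ lies in $G^\lambda(k)\subseteq G(k)$. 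This already yields the second claim, that $h_\lambda(p)\cdot v$ and $v$ lie in the same $G(k)$-orbit, and it also shows the limit is semisimple: semisimplicity means the $G$-orbit is Zariski closed, and $G\cdot(h_\lambda(p)\cdot v)=G\cdot v$.

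For minimality I would introduce $m(x):=\min\dim V_{\mu,0}$, the minimum taken over those $\mu\in X_*(G)_k$ for which $\lim_{t\to 0}\mu(t)\cdot x$ exists and is semisimple, so that $\Lambda(x,k)_{\rm min}$ consists of the $\mu$ realizing this minimum. The key point is that $m$ is constant on $G(k)$-orbits. Indeed, for $g\in G(k)$ the assignment $\mu\mapsto g\cdot\mu$ is a bijection carrying the cocharacters valid for $x$ to those valid for $g\cdot x$, since $\lim_{t\to 0}(g\cdot\mu)(t)\cdot(g\cdot x)=g\cdot\lim_{t\to 0}\mu(t)\cdot x$ exists and is semisimple exactly when the original limit is; and it preserves the relevant dimension because $V_{g\cdot\mu,0}=g\cdot V_{\mu,0}$. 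Hence $m(p\cdot\g)=m(\g)$. Since $\lambda\in\Lambda(\g,k)_{\rm min}$ gives $\dim V_{\lambda,0}=m(\g)=m(p\cdot\g)$, and I have shown $\lambda$ is valid for $p\cdot\g$, it follows that $\lambda\in\Lambda(p\cdot\g,k)_{\rm min}$.

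The only step that looks as though it could cause trouble is this minimality transfer, and I expect the obstacle to be psychological rather than technical. Writing $p=u\ell$ with $\ell\in G^\lambda$ and $u\in R_u(P(\lambda))$, the Levi factor moves $\g$ by a conjugation one can track directly, but the unipotent factor moves $\g$ in a way that does not obviously preserve the family of valid cocharacters, so one is tempted to compare the two families via the composition-of-limits result, Lemma~\ref{2.15}. The observation that $\dim V_{\mu,0}$ depends only on the $G$-conjugacy class of $\mu$ makes $m$ a genuine $G(k)$-invariant and removes the need for any such comparison; I therefore expect the whole argument to remain elementary, resting only on \eqref{p-conj} and the structure of the homomorphism $h_\lambda$.
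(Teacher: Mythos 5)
Your proposal is correct and follows essentially the same route as the paper: the second claim and semisimplicity come from the conjugation formula \eqref{p-conj} giving $h_\lambda(p)\cdot v$ with $h_\lambda(p)\in G(k)$, and minimality comes from the fact that conjugating a cocharacter by a $k$-point preserves existence of the limit, semisimplicity, and $\dim V_{\cdot,0}$. The only cosmetic difference is that you package this conjugation argument as $G(k)$-invariance of the minimal dimension function $m$, whereas the paper applies it directly to a competing $\mu\in\Lambda(p\cdot\g,k)$ via $\mu\mapsto p^{-1}\cdot\mu$; the content is identical.
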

\begin{proof} Let $\lambda\in  \Lambda(\g,k)_{\rm min}$.  By (\ref{p-conj}),
$\lim_{t\to 0} \lambda(t) \cdot (p\cdot \g) =h_\lambda(p) \cdot v ,$
so the limit exists and lies in the $G(k)$-orbit of $v$; consequently its $G$-orbit is closed.

On the other hand, given any 
$\mu\in \Lambda(p\cdot \g,k)$, we have that $p^{-1}\cdot \mu \in \Lambda(\g,k)$
and
$
\dim V_{\mu,0} = \dim V_{p^{-1}\cdot \mu, 0};
$ 
since $\lambda\in  \Lambda(\g,k)_{\rm min}$, this dimension is at
least $\dim V_{\lambda,0}$; hence $\lambda$ lies in
$\Lambda(p\cdot g, k)_{\rm min}$
\end{proof}

\begin{lem} \label{commuting}
Given $\lambda_0\in \Lambda(\g,k)_{\rm min}$, write $v_0=\lim_{t\to
  0}\lambda_0(t)\cdot \g$.  Suppose that $A\subset G$ is a torus with
$\lambda_0\in X_*(A)_k$.  Suppose that for $\lambda\in X_*(A)_k$,
the limit $v=\lim_{t\to 0} \lambda(t)\cdot \g$ exists and has a closed
$G$-orbit.  Then $v$ lies in $G(k)\cdot v_0$. 
\end{lem}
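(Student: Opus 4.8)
The plan is to exploit the fact that $\lambda$ and $\lambda_0$ both live in the abelian group $A$, hence commute, and to slide \emph{both} limit points $v$ and $v_0$ onto one common ``doubly graded'' vector. Applying Lemma~\ref{3.7} from each of the two sides then forces $v$ and $v_0$ into the same $G(k)$-orbit.

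First I would set up the bookkeeping. Since $A$ is a torus and $\lambda,\lambda_0\in X_*(A)_k$, the two one-parameter subgroups commute, so (each being a split $\GL(1)$ acting over $k$) $V$ decomposes over $k$ into simultaneous eigenspaces
\[
V=\bigoplus_{m,n\in\mathbb Z} V_{m,n},\qquad V_{m,n}=V_{\lambda,m}\cap V_{\lambda_0,n},
\]
and accordingly $\g=\sum_{m,n}\g_{m,n}$ with each $\g_{m,n}\in V_{m,n}$ defined over $k$. Because $v=\lim_{t\to0}\lambda(t)\cdot\g$ exists, $\g\in V_{\lambda,0+}$, i.e.\ $\g_{m,n}=0$ whenever $m<0$; because $v_0=\lim_{t\to0}\lambda_0(t)\cdot\g$ exists, similarly $\g_{m,n}=0$ whenever $n<0$. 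Using the description of a limit as the projection onto the $0$-graded piece, this gives $v=\sum_{n\ge0}\g_{0,n}$ and $v_0=\sum_{m\ge0}\g_{m,0}$. Here $v_0\in V(k)$ is semisimple because $\lambda_0\in\Lambda(\g,k)_{\rm min}$, and $v\in V(k)$ is semisimple by hypothesis.

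Next I would take a second limit of each of $v_0$ and $v$ along the other cocharacter. As $v_0$ is supported only in bidegrees $(m,0)$ with $m\ge0$, the limit $\lim_{t\to0}\lambda(t)\cdot v_0$ exists and equals its $\lambda$-degree-$0$ part, namely $\g_{0,0}$; since $v_0$ is semisimple, Lemma~\ref{3.7} gives $\g_{0,0}\in G(k)\cdot v_0$. Symmetrically $v$ is supported only in bidegrees $(0,n)$ with $n\ge0$, so $\lim_{t\to0}\lambda_0(t)\cdot v$ exists and also equals $\g_{0,0}$, whence Lemma~\ref{3.7} (applied to the semisimple $v$) gives $\g_{0,0}\in G(k)\cdot v$. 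Combining, $G(k)\cdot v=G(k)\cdot\g_{0,0}=G(k)\cdot v_0$, so $v\in G(k)\cdot v_0$, as required.

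The only point needing care—and the main obstacle I would flag—is the existence of the two secondary limits: one must verify that projecting $\g$ onto $V_{\lambda_0,0}$ to form $v_0$ cannot introduce negative $\lambda$-weights, which is precisely what the compatibility of the two commuting gradings together with $\g\in V_{\lambda,0+}$ guarantees, and dually for $v$. Everything else is formal. I note that this argument invokes the hypothesis $\lambda_0\in\Lambda(\g,k)_{\rm min}$ only through the clause that makes $v_0$ semisimple; the dimension-minimality itself is not needed for this lemma (it presumably drives the reduction to the commuting case in the proof of Theorem~\ref{thm}). An alternative, slightly longer route would instead feed $v'=\lim_{t\to0}\lambda(t)\cdot v_0$ into Lemma~\ref{2.15} to produce $\mu$ with $V_{\mu,0}=V_{\lambda_0,0}\cap V_{\lambda,0}$ and then use dimension-minimality of $\lambda_0$ to force $V_{\lambda_0,0}\subseteq V_{\lambda,0}$; the two-sided limit above avoids invoking Lemma~\ref{2.15} altogether.
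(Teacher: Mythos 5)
Your proof is correct, but it is not the paper's argument --- it is a genuinely different and in fact leaner route. The paper forms only the one secondary limit $v'=\lim_{t\to 0}\lambda(t)\cdot v_0$, feeds it into Lemma~\ref{2.15} to produce a single cocharacter $\mu$ with $v'=\lim_{t\to 0}\mu(t)\cdot\g$ and $V_{\mu,0}=V_{\lambda_0,0}\cap V_{\lambda,0}$, uses the minimality of $\dim V_{\lambda_0,0}$ to force $V_{\mu,0}=V_{\lambda_0,0}$ and hence $v'=v_0$, then checks that $\lim_{t\to 0}\mu(t)\cdot v=v_0$ and applies Lemma~\ref{3.7} once, to $v$. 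Your bigrading $V=\bigoplus_{m,n}V_{m,n}$ replaces all of that with the symmetric observation that both secondary limits $\lim_{t\to 0}\lambda(t)\cdot v_0$ and $\lim_{t\to 0}\lambda_0(t)\cdot v$ exist (because the joint support of $\g$ lies in the quadrant $m,n\ge 0$) and are literally the same vector $\g_{0,0}$, so that Lemma~\ref{3.7}, applied once to the semisimple $k$-point $v_0$ and once to the semisimple $k$-point $v$, puts $\g_{0,0}$ in both $G(k)$-orbits and hence identifies them. What your approach buys: no appeal to Lemma~\ref{2.15}, and --- as you correctly note --- no use of dimension-minimality at all, so you have in fact proved the stronger statement that any two semisimple limits of $\g$ along $k$-cocharacters of a common torus are $G(k)$-conjugate; in the paper's route the minimality is genuinely load-bearing, since it is what forces $V_{\mu,0}=V_{\lambda_0,0}$. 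Your closing guess about where minimality matters is also accurate: in the proof of Theorem~\ref{thm} it enters through Lemma~\ref{uv} and the reduction to a common maximal $k$-split torus, and with your stronger form of the present lemma even that use could be trimmed, since the Bruhat-decomposition step already produces two commuting $k$-cocharacters whose limits are $G(k)$-conjugate to $v$ and $v_0$ respectively.
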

\begin{proof} By Lemma \ref{limits} the existence
  of the limit $v$ implies that for every $\chi\in \spp(\g)$
  we have $\langle \lambda, \chi\rangle \ge 0$, and the vector $v$ is
  the sum 
$$\sum_{\chi\in\spp \g\atop \langle \lambda ,\chi\rangle =0}\g_\chi,$$
the projection of $\g$ to $V_{\lambda,0}$;
in particular 
$\spp v \sse \spp \g$ and $\g-v\in V_{\lambda,+}$
.  Similarly $\spp(v_0)$ is 
contained in $\spp(\g)$, and so by Lemma \ref{limits} we may conclude
that the limit
$$v'=\lim_{t\to 0} \lambda(t)\cdot v_0$$
exists.  Since $G\cdot v_0$ is closed, $v'$ lies in $G\cdot v_0$, so that $G\cdot v'$ is also closed.

We then obtain, from Lemma \ref{2.15},
a $\mu\in \Lambda(\g,k)$ with $v'=\lim_{t\to 0} \mu(t)\cdot \g$, 
having a closed $G$-orbit, and
\begin{align}
\label{eq:v0}V_{\mu,0} &= V_{\lambda_0, 0} \cap V_{\lambda,0}\\
\label{eq:v+}V_{\mu,+} &\supseteq V_{\lambda_0,+}, \quad
V_{\mu,0+} \subseteq V_{\lambda_0,0+},
\end{align}

Since $\lambda_0$ lies in $\Lambda(\g,k)_{\rm min}$, we may conclude that $V_{\mu, 0}=V_{\lambda_0,0}$, and hence by (\ref{eq:v0}), (\ref{eq:v+}), also that
$V_{\mu,0+} = V_{\lambda_0,0+}$.  The limit point $v'$ is the
projection of $\g$ to $V_{\mu,0} = V_{\lambda_0,0}$, hence $v'=v_0$.

Since $\lim_{t\to 0} \mu(t)\cdot \g$ exists, $\g$ and hence $v$ lie in $V_{\mu,0+}$.
Now, the projection of $\g-v$ to 
$$V_{\lambda,0}
=\sum_{{\chi\in X^*(A)_k}\atop{\langle \lambda, \chi \rangle = 0}}
V^\chi$$
is zero.
By (\ref{eq:v0}), $V_{\mu,0}\subseteq V_{\lambda,0}$, so the
projection of $\g-v\in V_{\mu,0+}$ to $V_{\mu,0}$ is also zero, 
and hence
$$
\lim_{t\to 0}\mu(t)\cdot v =\lim_{t\to 0} \mu(t)\cdot \g = v' = v_0.
$$
By \ref{3.7}, we can finally conclude that $v$ lies in $G(k)\cdot v_0$.
\end{proof}

\begin{proof}[Proof of Theorem \ref{thm}]
First, note that a cocharacter in $G$ is necessarily a cocharacter in
the connected component $G^0$ of the identity in $G$, and that it is
sufficent to prove Theorem \ref{thm} for $G^0$.  Without loss of
generality, we therefore assume that $G$ is connected.

Pick $\lambda_0\in\Lambda(\g,k)_{\rm min}$, set $v_0 = \lim_{t\to 0}
\lambda_0(t)\cdot \g$.  Since being in the same $G(k)$-orbit is an
equivalence relation, it is clearly sufficient to prove the theorem
for $\mu=\lambda_0$, $v'=v_0$.

The image of $\lambda_0$ lies in a maximal torus, and by
\cite[1.4]{Borel-Tits} must in fact lie in a maximal $k$-split torus $A$.
Fix a minimal $k$-defined parabolic subgroup $P$ of $G$, with
$C_G(A)\subseteq P \subseteq P(\lambda_0)$.  The choice of $P$
corresponds to a choice of basis ${}_k\Delta$ of 
simple roots of $G$ with respect to $A$.

The image of $\lambda$ also lies in some maximal $k$-split torus, so since
all maximal $k$-split tori are conjugate over $G(k)$
(\cite[Thm.~20.9(ii)]{Bo}), there exists $g\in G(k)$ so that the
image of $g \cdot \lambda$ lies in $A$.  Multiplying $g$ on the
left by an element of $N_G(A)(k)$ if necessary, we can arrange that
$\langle g\cdot \lambda, \alpha\rangle \ge 0$ for every $\alpha
\in {}_k\Delta$, that is, $P \subseteq P(g\cdot\lambda)$.  Let
us write $\lambda_A$ for $g\cdot \lambda \in X_*(A)$. 

We now apply the Bruhat decomposition: write 
$$
g=p w u, \quad p\in P(k)\subseteq P(\lambda_A)(k),\ 
w\in N_G(A), \ u\in R_u(P)(k).
$$
Then 
\begin{align}
v & = \lim_{t\to 0} \lambda(t)\cdot \g = g^{-1}\cdot \lim_{t\to 0}
   \lambda_A(t) g\cdot \g \\ 
\label{eq:line 2}&= g^{-1}\cdot \left[ \lim_{t\to 0}
  \lambda_A(t)p\lambda(t)^{-1}\right] \cdot \lim_{t\to 0}
\lambda_A(t)  w u \cdot \g\\ 
&=g^{-1} h_{\lambda_A}(p) \cdot \lim_{t\to 0} 
   \lambda_A(t)   w u \cdot \g\\ 
& = g^{-1} h_{\lambda_A}(p)  w \cdot \lim_{t\to 0} 
   ( w^{-1}\cdot \lambda_A)(t)\cdot (u\cdot \g),
\end{align}
with $g h_{\lambda_A}(p) w\in G(k)$.  Note that the existence of the
first limit in (\ref{eq:line 2}) implies the existence of the second.

Now, $u\in R_u(P)(k) \subseteq P(k) \subseteq P(\lambda_0)(k)$, so by
Lemma \ref{uv}, 
$\lambda_0 \in \Lambda(u\cdot \g, k)_{\rm min}.$  Notice also that
$\lambda_0$ and $ w \cdot \lambda_A$ both lie in $X_*(A)_k$.  By
Lemmas \ref{commuting} and \ref{uv},
$$
\lim_{t\to 0} ( w \cdot \lambda_A) \cdot (u\cdot \g) \in 
G(k) \cdot \lim_{a\to 0} \lambda_0(t)\cdot(u\cdot \g) = G(k)\cdot v_0
$$
so $v$ is also in $G(k)\cdot v_0$.
\end{proof}

\section{Application to Jordan decompositions}

In this section we require $k$ to have characteristic 0.

\begin{defn} (a) A {\it Jordan-Kac-Vinberg decomposition} of a vector $\g\in V$ is a
  decomposition $\g =s + n$, with 
\begin{enumerate}
\item $s$ semisimple,
\item $n$ nilpotent with respect to $G_s$,
\item $G_\g \sse G_s$.
\end{enumerate}
(b) Given $\g\in V(k)$, a {\it $k$-Jordan-Kac-Vinberg decomposition} of $\g$ is a
Jordan-Kac-Vinberg decomposition $\g=s+n$ with $s$ (and hence $n$) in $V(k)$.
\end{defn}

Kac \cite{kac_proof} used the Luna Slice theorem to prove that every
vector has a Jordan-Kac-Vinberg decomposition. We now show that every
vector in $V(k)$ has a $k$-Jordan-Kac-Vinberg decomposition.

Bremigan proved a rational version of the Luna Slice Theorem in
\cite{Brem}.  The following, \cite[Cor.~3.4]{Brem} is an immediate
consequence of it.
\begin{lem} \label{Brem}
Given $v\in V$ semisimple, let $F$ be the set of points $x\in V$ with
$G\cdot v \sse \ovl{G\cdot x}$. Then there is a
$G$-invariant retraction $\psi\colon F \to G\cdot v$ that is defined over $k$.
\end{lem}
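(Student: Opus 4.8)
The plan is to derive this directly from the rational Luna slice theorem of \cite{Brem}, whose content is a strongly étale local model of the $G$-action near the closed orbit $G\cdot v$. Throughout I take $v\in V(k)$, which is the relevant case and is what makes ``defined over $k$'' meaningful, and I write $\pi\colon V\to V/\!\!/G$ for the categorical quotient, a $k$-morphism. The first step is to identify $F$ with a single fibre of $\pi$. Since $\pi$ is $G$-invariant it is constant on orbit closures, so $G\cdot v\sse\ovl{G\cdot x}$ forces $\pi(v)=\pi(x)$; conversely each fibre of $\pi$ contains a unique closed orbit, which lies in the closure of every orbit in that fibre, so $\pi(x)=\pi(v)$ forces $G\cdot v\sse\ovl{G\cdot x}$. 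Hence $F=\pi^{-1}(\pi(v))$, a $G$-stable closed $k$-subvariety of $V$.

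Next I would apply Bremigan's theorem at the $k$-point $v$, whose stabilizer $H=G_v$ is reductive (characteristic $0$). This produces, over $k$, an $H$-stable slice $S\ni v$ and a strongly étale $G$-morphism $\phi\colon G\times_H S\to U$ onto a $G$-saturated open neighbourhood $U$ of $G\cdot v$ with $\phi([e,v])=v$; ``strongly étale'' means that in the diagram
\[
\begin{CD}
G\times_H S @>{\phi}>> U\\
@V{p}VV @VV{\pi}V\\
S/\!\!/H @>{\ovl\phi}>> U/\!\!/G
\end{CD}
\]
the map $\ovl\phi$ is étale and the square is Cartesian. Because $U$ is saturated and contains $G\cdot v$, the whole fibre $F=\pi^{-1}(\pi(v))$ lies inside $U$, so no information is lost by working in $U$.

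The structural heart of the argument is to read off $F$ from the slice using the Cartesian property. The fibre of $p$ over $\pi_S(v)\in S/\!\!/H$ is mapped by $\phi$ isomorphically onto the fibre of $\pi$ over $\ovl\phi(\pi_S(v))=\pi(v)$, namely $F$; and that fibre of $p$ is exactly $G\times_H N$, where $N=\pi_S^{-1}(\pi_S(v))\sse S$ is the null-fibre of the slice. (If $\ovl\phi^{-1}(\pi(v))$ contains points besides $\pi_S(v)$ one simply restricts to the component over $\pi_S(v)$; étaleness of $\ovl\phi$ makes this harmless.) Thus $\phi$ restricts to a $G$-equivariant $k$-isomorphism $G\times_H N\xrightarrow{\;\sim\;}F$.

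It then remains to build the retraction on the slice and transport it. Since $H=G_v$ fixes $v$ and $v\in N$, the constant map $r\colon N\to\{v\}$ is $H$-equivariant, so it induces a $G$-equivariant $k$-morphism $G\times_H N\to G\times_H\{v\}=G\cdot v$ that is the identity on $G\times_H\{v\}$. Composing with the inverse of the isomorphism above yields $\psi\colon F\to G\cdot v$, which is $G$-invariant, defined over $k$, and restricts to the identity on $G\cdot v$ (as $\phi$ carries $G\times_H\{v\}$ onto $G\cdot v$). The delicate point—and the main obstacle—is the \emph{rationality} of the entire construction: an ordinary slice theorem gives all of this only over $\ovl k$, and it is precisely Bremigan's rational refinement that provides $S$, $\phi$, and the Cartesian square over $k$, so that the resulting $\psi$ is a $k$-morphism.
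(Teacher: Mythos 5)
Your proposal is correct and matches the paper's approach: the paper gives no independent proof of this lemma, citing it directly as \cite[Cor.~3.4]{Brem}, an ``immediate consequence'' of Bremigan's rational Luna slice theorem, and your argument is exactly the standard deduction of that corollary from the slice theorem (identify $F$ with the quotient fibre $\pi^{-1}(\pi(v))$, use the Cartesian square to identify $F$ with $G\times_H N$ over $k$, and collapse $N$ to $v$). Your explicit restriction to $v\in V(k)$ is harmless, since that is precisely the case used in the paper's application (the semisimple point produced by the rational Hilbert--Mumford theorem in Corollary~\ref{k-Jordan}).
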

\begin{Rem}For fields of positive characteristic, the Luna Slice
  Theorem does not hold without additional assumptions.  See
  \cite{Bardsley-Richardson} for further details.
\end{Rem}

\begin{cor}\label{k-Jordan}Every $\g\in V(k)$ has a $k$-Jordan-Kac-Vinberg decomposition.\end{cor}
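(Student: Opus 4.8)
The plan is to combine the rational Hilbert--Mumford theorem (Lemma~\ref{HM}) with Bremigan's rational retraction (Lemma~\ref{Brem}), reserving Kac's geometric argument \cite{kac_proof} for the nilpotency condition alone. First I would apply Lemma~\ref{HM} to $\g$, obtaining a cocharacter $\lambda\in X_*(G)_k$ whose limit $v=\lim_{t\to0}\lambda(t)\cdot\g$ is semisimple and, by the remark accompanying that lemma, lies in $V(k)$. The orbit $\mathcal O=G\cdot v$ is then the unique closed orbit in $\ovl{G\cdot\g}$; it is defined over $k$ and, crucially, contains the $k$-point $v$. This rational semisimple anchor is exactly what is needed to invoke Lemma~\ref{Brem}, whose retraction is guaranteed to be $k$-defined only because we feed it a $k$-rational closed orbit.

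Next I would apply Lemma~\ref{Brem} with this $v$ to get a $k$-defined $G$-equivariant retraction $\psi\colon F\to\mathcal O$, where $F=\{x\mid \mathcal O\sse\ovl{G\cdot x}\}$. Since $\mathcal O\sse\ovl{G\cdot\g}$ we have $\g\in F$, so I may set $s:=\psi(\g)$ and $n:=\g-s$. Because $\psi$ is defined over $k$ and $\g\in V(k)$, both $s$ and $n$ lie in $V(k)$; this is the sole source of rationality in the argument. Condition~(1) is then immediate, as $s\in\mathcal O$ is semisimple. Condition~(3) falls straight out of equivariance: for $g\in G_\g$ one has $g\cdot s=g\cdot\psi(\g)=\psi(g\cdot\g)=\psi(\g)=s$, so $g\in G_s$ and hence $G_\g\sse G_s$.

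The remaining condition~(2), that $n$ is nilpotent with respect to $G_s$, is the heart of the matter and where I expect the real difficulty to lie. Since $G_s$ fixes $s$, we have $G_s\cdot\g=s+G_s\cdot n$, so condition~(2) is equivalent to the containment $s\in\ovl{G_s\cdot\g}$. Granting this, $G_s$ is reductive (Matsushima's criterion, $\mathcal O$ being closed and $\operatorname{char}k=0$), and applying the Hilbert--Mumford theorem to the $G_s$-action produces $\mu\in X_*(G_s)$ with $\lim_{t\to0}\mu(t)\cdot\g=s$, whence $\lim_{t\to0}\mu(t)\cdot n=0$ and $n$ is nilpotent for $G_s$. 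Thus $\g=s+n$ would be the desired $k$-Jordan--Kac--Vinberg decomposition.

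The main obstacle is therefore to justify precisely the containment $s=\psi(\g)\in\ovl{G_s\cdot\g}$, i.e.\ that the retraction carries $\g$ into its own $G_s$-orbit closure. This is not a formal consequence of $\psi$ being an equivariant retraction; it encodes the Luna slice structure underlying Lemma~\ref{Brem}, namely that $\psi$ collapses each slice onto the closed-orbit point at its centre, so that $\psi(x)=w$ for $w\in\mathcal O$ precisely when $x-w$ lies in the $G_w$-null-cone. The clean way to secure this is to run Kac's slice argument \cite{kac_proof} over $\ovl k$ to produce a Jordan--Kac--Vinberg decomposition $\g=s'+n'$ with $s'\in\mathcal O$, and then identify $s'=\psi(\g)$; once that identification is in hand, conditions~(1)--(3) hold geometrically and the rationality $s'=\psi(\g)\in V(k)$ is the genuinely new content. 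Pinning down this identification of Bremigan's retraction with the Luna retraction is the step I expect to require the most care.
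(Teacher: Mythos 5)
Up to the nilpotency condition your argument coincides with the paper's proof: Lemma~\ref{HM} produces a semisimple $v\in\ovl{G\cdot\g}\cap V(k)$, Lemma~\ref{Brem} applied to this $v$ gives a $k$-defined equivariant retraction $\psi\colon F\to G\cdot v$, and setting $s=\psi(\g)$, $n=\g-s$ yields rationality, semisimplicity of $s$, and $G_\g\sse G_s$ exactly as you say. The genuine gap is condition~(2): you correctly reduce it to the containment $s\in\ovl{G_s\cdot\g}$, but you never prove that containment --- you defer it to an intended identification of Bremigan's $\psi$ with the Luna retraction, to be obtained by running Kac's construction over $\ovl k$ and matching its output $s'$ with $\psi(\g)$. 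That route cannot be completed from the statements you are quoting. Jordan-Kac-Vinberg decompositions are not unique, and their semisimple parts can be different points of the closed orbit; so the point $s'$ coming out of Kac's argument is not canonical, and there is no a priori reason it should equal $\psi(\g)$. Moreover the statement of Lemma~\ref{Brem} gives you no access to how $\psi$ was constructed, so there is nothing concrete to identify it with. In effect you have proved conditions~(1) and~(3) and left the heart of the corollary, condition~(2), as a conjecture.

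The paper closes this step without ever opening up the slice theorem, using only the equivariance of $\psi$ together with results already stated: since $G_s$ fixes $s$, equivariance gives $\psi(G_s\cdot\g)=\{s\}$, hence by continuity $\psi(\ovl{G_s\cdot\g})=\{s\}$; since $G\cdot s$ is closed, $G_s$ is reductive, so $\ovl{G_s\cdot\g}$ contains a unique closed $G_s$-orbit, and the paper concludes that this orbit is $\{s\}$, i.e. $s\in\ovl{G_s\cdot\g}$, i.e. $0\in\ovl{G_s\cdot n}$, which is precisely condition~(2). If you want the last inference justified, it follows from the paper's own toolkit rather than from any internal property of Bremigan's construction: let $G_s\cdot y$ be the closed $G_s$-orbit in $\ovl{G_s\cdot\g}\sse\psi^{-1}(s)$; by the Hilbert-Mumford theorem over $\ovl k$ there is a cocharacter $\lambda$ with $w=\lim_{t\to0}\lambda(t)\cdot y$ lying in the unique closed $G$-orbit $G\cdot v$ of $\ovl{G\cdot y}$; equivariance and the retraction property force $w=\psi(w)=\lim_{t\to0}\lambda(t)\cdot s$, so Lemma~\ref{3.7} gives $w=u\cdot s$ with $u\in R_u(P(\lambda))$; then $\nu=u^{-1}\cdot\lambda$ has image in $G_s$ and $\lim_{t\to0}\nu(t)\cdot y=u^{-1}\cdot w=s$, so closedness of $G_s\cdot y$ forces $y=s$. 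So your instinct that condition~(2) is the crux was sound, and your suspicion that it is not a purely formal consequence of having some equivariant retraction deserved attention; but the resolution lies in the lemmas the paper has already assembled, not in a comparison of Bremigan's retraction with Luna's, which is both unnecessary and, from the quoted statements alone, impossible to carry out.
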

\begin{proof} 
Let $\g\in V(k)$.  By Lemma \ref{HM}, there exists a
semisimple $v\in \ovl{G\cdot \g} \cap V(k).$   Lemma \ref{Brem}
provides a $G$-invariant map $\psi$, defined over $k$, from 
$$F=\{\,x\in V\mid  \ovl{G\cdot x} \supseteq G\cdot v\,\}$$
to $G\cdot v$. Setting $s=\psi(\g)$, we immediately see that $s\in
V(k)$, that $s$ is semisimple, and that $G_\g\sse G_s$.  Furthermore,
since $\psi$ is $G$-invariant, $\psi(G_s\cdot\g)=s$, and hence 
$\psi(\ovl{G_s\cdot \g})= s$, so that the unique closed $G_s$-orbit in 
$\ovl{G_s\cdot\g}$ is $s$. Subtracting $s$, the unique closed
$G_s$-orbit in $\ovl{G_s\cdot(\g-s)}$ is 0.  Therefore $\g=s+(\g-s)$
is a $k$-Jordan-Kac-Vinberg decomposition.
\end{proof}

We can use the Hilbert-Mumford theorem to provide an alternate
description of a Jordan-Kac-Vinberg decomposition.

\begin{prop}\label{Jordan-lambda}
A decomposition $\g=s+n$, with $s$ semisimple, and $G_\g\sse G_s$, is a Jordan-Kac-Vinberg
decomposition if and only if there exists $\lambda\in X_*(G_s)$ so that
\begin{equation}\label{to_s}
\lim_{t\to 0}\lambda(t)\cdot \g = s.
\end{equation}
If $\g\in V(k)$, then
$\g=s+n$ is a $k$-Jordan-Kac-Vinberg decomposition if and only if $\lambda$ can be
taken to be in $X_*(G_s)_k$.
\end{prop}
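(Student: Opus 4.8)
The plan is to reduce both directions to a statement about the nilpotent part $n$ alone, under the stabilizer $G_s$, and then feed that statement into the Hilbert--Mumford machinery. The one computation that drives everything is that each $\lambda(t)$, for $\lambda\in X_*(G_s)$, fixes $s$, so that
\[
\lambda(t)\cdot\g = \lambda(t)\cdot(s+n) = s + \lambda(t)\cdot n .
\]
Hence the limit $\lim_{t\to 0}\lambda(t)\cdot\g$ exists and equals $s$ if and only if $\lim_{t\to 0}\lambda(t)\cdot n$ exists and equals $0$. This converts condition~(\ref{to_s}) into the single assertion that $n$ can be driven to $0$ by a one-parameter subgroup of $G_s$, which is precisely the Hilbert--Mumford reformulation of $n$ being nilpotent with respect to $G_s$.

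Since $s$ is semisimple, the orbit $G\cdot s\cong G/G_s$ is closed and hence affine, so by Matsushima's criterion (we are in characteristic $0$) the group $G_s$ is reductive; thus the Hilbert--Mumford theorem is available for $G_s$ acting on $V$. For the forward implication I would argue as follows: if $\g=s+n$ is a Jordan-Kac-Vinberg decomposition then $n$ is nilpotent with respect to $G_s$, i.e. $0\in\ovl{G_s\cdot n}$, and applying the Hilbert--Mumford criterion to the reductive group $G_s$ and the closed $G_s$-stable set $\{0\}$ produces $\lambda\in X_*(G_s)$ with $\lim_{t\to 0}\lambda(t)\cdot n = 0$, hence $\lim_{t\to 0}\lambda(t)\cdot\g=s$ by the displayed identity. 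For the converse, given $\lambda\in X_*(G_s)$ with $\lim_{t\to 0}\lambda(t)\cdot\g=s$, the same identity forces $\lim_{t\to 0}\lambda(t)\cdot n=0$, so $0\in\ovl{G_s\cdot n}$ and $n$ is nilpotent with respect to $G_s$; together with the standing hypotheses that $s$ is semisimple and $G_\g\sse G_s$, this is exactly a Jordan-Kac-Vinberg decomposition.

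For the rational refinement, assume $\g\in V(k)$. If $\g=s+n$ is a $k$-Jordan-Kac-Vinberg decomposition then $s\in V(k)$, so $G_s$ is defined over $k$ and $n=\g-s\in V(k)$. I would then apply the rational Hilbert--Mumford theorem, Lemma~\ref{HM}, to the reductive $k$-group $G_s$ and the point $n$, obtaining $\lambda\in X_*(G_s)_k$ such that $m=\lim_{t\to 0}\lambda(t)\cdot n$ exists and has closed $G_s$-orbit. Since $n$ is nilpotent, $\ovl{G_s\cdot n}$ contains $0$ and its unique closed $G_s$-orbit is $\{0\}$; as $m$ lies in $\ovl{G_s\cdot n}$ with a closed $G_s$-orbit, it follows that $m=0$, and therefore $\lim_{t\to 0}\lambda(t)\cdot\g=s$ with $\lambda\in X_*(G_s)_k$. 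Conversely, if such a $k$-cocharacter $\lambda$ exists then $s=\lim_{t\to 0}\lambda(t)\cdot\g$ automatically lies in $V(k)$ (the limit of a $k$-cocharacter acting on a $k$-point is a $k$-point), so $n\in V(k)$ and the converse of the first part applies, exhibiting a Jordan-Kac-Vinberg decomposition defined over $k$.

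I expect the one delicate point to be the rational forward direction: unlike the non-rational case, one cannot merely quote a cocharacter that drives $n$ to $0$, but must produce a $k$-rational one, and Lemma~\ref{HM} only guarantees a $k$-cocharacter whose limit is \emph{semisimple}. The crux is therefore identifying that semisimple limit with $0$, which rests on the fact that the closure of a single orbit contains a unique closed orbit. Establishing reductivity of $G_s$, so that both the classical Hilbert--Mumford theorem and Lemma~\ref{HM} apply to $G_s$, is the other ingredient, but this is standard in characteristic $0$.
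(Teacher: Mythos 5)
Your proposal is correct and takes essentially the same route as the paper: rewrite (\ref{to_s}) as $\lim_{t\to 0}\lambda(t)\cdot n = 0$ using the fact that $\lambda(t)$ fixes $s$, and then apply the (rational) Hilbert--Mumford theorem to the group $G_s$ acting on $n$. You additionally spell out two points the paper leaves implicit --- the reductivity of $G_s$ via Matsushima's criterion, and the identification of Kempf's semisimple limit with $0$ via uniqueness of the closed orbit in $\ovl{G_s\cdot n}$ --- which is a sound and slightly more careful rendering of the same argument.
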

\begin{proof}The first part of the Proposition is just the second part
  over $\ovl k$, so we need only consider the second part.  

Given a $k$-Jordan-Kac-Vinberg decomposition $\g=s+n$, we know that $0\in
\ovl{G_s\cdot n}$.  The Hilbert-Mumford Theorem (Lemma \ref{HM})
provides a $\lambda \in X_*(G_s)_k$ such that 
\begin{equation}\label{to_0}
\lim_{t\to 0}\lambda(t)\cdot n = 0.
\end{equation}
However, since the image of $\lambda$ is in $G_s$, we can add $s$ and
obtain (\ref{to_s}).

In the other direction, given $\lambda\in X_*(G_s)_k$, subtracting $s$
from (\ref{to_s}) gives (\ref{to_0}), implying that $n$ is nilpotent with
respect to $G_s$.
Since $\g$ and $\lambda$ are defined over $k$, so are $s$ and $n$,
hence $\g=s+n$ is a $k$-Jordan-Kac-Vinberg decomposition. 
\end{proof}

From Proposition~\ref{Jordan-lambda} and Theorem~\ref{thm}, we
immediately obtain the following:
\begin{cor}For any two $k$-Jordan-Kac-Vinberg decompositions $\g=s+n$, $\g=s'+n'$ of
  $\g\in V(k)$, we have $s'\in G(k)\cdot s$.
\end{cor}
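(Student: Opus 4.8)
The plan is to reduce the corollary directly to Theorem~\ref{thm} by way of Proposition~\ref{Jordan-lambda}. Given two $k$-Jordan-Kac-Vinberg decompositions $\g=s+n$ and $\g=s'+n'$, I want to exhibit $k$-cocharacters of $G$ realizing $s$ and $s'$ as limits of $\g$, so that the theorem can be applied.

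First I would invoke Proposition~\ref{Jordan-lambda}. Since $\g=s+n$ is a $k$-Jordan-Kac-Vinberg decomposition, the proposition supplies a cocharacter $\lambda\in X_*(G_s)_k$ with $\lim_{t\to 0}\lambda(t)\cdot \g = s$; likewise the decomposition $\g=s'+n'$ yields a cocharacter $\mu\in X_*(G_{s'})_k$ with $\lim_{t\to 0}\mu(t)\cdot \g = s'$. The key observation is that $X_*(G_s)_k\subseteq X_*(G)_k$ and $X_*(G_{s'})_k\subseteq X_*(G)_k$, so both $\lambda$ and $\mu$ are cocharacters of $G$ defined over $k$.

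Next I would verify that the two limit points are semisimple. This is immediate: $s$ and $s'$ are the semisimple parts of their respective decompositions, so by definition $G\cdot s$ and $G\cdot s'$ are Zariski closed, i.e.\ $s$ and $s'$ are semisimple vectors in the sense required by Theorem~\ref{thm}. Thus we have exhibited $\lambda,\mu\in X_*(G)_k$ such that $v=\lim_{t\to 0}\lambda(t)\cdot\g=s$ and $v'=\lim_{t\to 0}\mu(t)\cdot\g=s'$ both exist and are semisimple.

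Finally I would apply Theorem~\ref{thm} with these choices to conclude that $s'=v'$ lies in $G(k)\cdot v=G(k)\cdot s$, which is exactly the assertion. I expect no genuine obstacle here: the entire content has been front-loaded into Theorem~\ref{thm} and Proposition~\ref{Jordan-lambda}, and the corollary is essentially a bookkeeping step matching the hypotheses. The only point requiring a moment's care is confirming that cocharacters of the stabilizer $G_s$ are legitimately cocharacters of $G$ over $k$, and that the "semisimple limit" hypothesis of the theorem coincides with the "semisimple vector" notion from the Jordan decomposition definition; both are immediate from the definitions.
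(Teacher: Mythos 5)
Your proposal is correct and is precisely the argument the paper intends: the corollary is stated as an immediate consequence of Proposition~\ref{Jordan-lambda} and Theorem~\ref{thm}, and your write-up simply makes that reduction explicit (extracting $\lambda\in X_*(G_s)_k\subseteq X_*(G)_k$ and $\mu\in X_*(G_{s'})_k\subseteq X_*(G)_k$ with semisimple limits $s$ and $s'$, then applying the theorem). No gaps; this matches the paper's approach exactly.
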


This means that although a vector $\g\in V(k)$ may have multiple
$k$-Jordan-Kac-Vinberg decompositions, all such decompositions lie in a single
$G(k)$-orbit.

\proof[Acknowledgements]
We thank G. R\"ohrle for pointing out that the proof of Theorem \ref{thm}
applies, and hence Theorem \ref{thm} also holds, for any perfect field
$k$; and also for his careful proofreading.

\bibliographystyle{plain}

\end{document}